\newcommand{\dist}{\hbox{dist}}
\definecolor{todocolor}{rgb}{1.0, 0.0, 0.5}
 \newtheorem{thm}{Theorem} 
 \newtheorem{cor}[thm]{Corollary}
 \newtheorem{lem}[thm]{Lemma}
 \newtheorem{prop}[thm]{Proposition}
 \theoremstyle{definition}
 \theoremstyle{remark}
 \numberwithin{equation}{section}
\begin{document}

\title{Related Problems in Spherical and Solid Geometry}
\author{Michael Q. Rieck\footnote{Drake University, Math \& CS Dept., Des Moines, IA 50310, USA; {\tt michael.rieck@drake.edu}}}
\date{ \ } 
\maketitle
\begin{abstract}
A problem that is simple to state in the context of spherical geometry, and that seems rather interesting, appears to have been unexamined to date in the mathematical literature. The problem can also be recast as a problem in the real projective plane. The problem on the sphere involves four great circles and their intersections. A substantial claim is made concerning this problem, and subsequently proved by relating the spherical problem to a compelling problem in solid geometry. This latter problem essentially concerns relationships between the angles of a tetrahedron, and has practical applications, particularly in connection with the Perspective 3-Point (Pose) Problem. 

\vspace{4mm}

\noindent {\bf AMS subject classification:} 51M15, 51N20, 52B10, 51N15, 51M10 \\ 

\vspace{-2mm}

\noindent {\bf keywords:} spherical geometry, solid geometry, tetrahedron, projective plane, elliptic plane. \\ 

\end{abstract}

\section{A problem in spherical geometry}

\begin{figure}[ht]
  \centering
  \includegraphics[width=6.5cm]{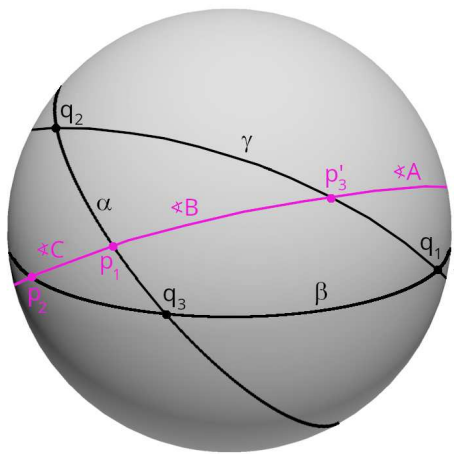} 
  \caption{The spherical geometry problem}
  \label{fig:fig1}
\end{figure}

The problem considered in this section is concerned with great circles on the unit sphere. It begins with six positive numbers, $\angle A$, $\angle B$, $\angle C$, $\alpha$, $\beta$, and $\gamma$, subject to these restrictions: $\angle A + \angle B + \angle C = \pi$, $\alpha + \beta + \gamma < 2 \pi$, $\alpha < \beta + \gamma$, $\beta < \gamma + \alpha$ and $\gamma < \alpha + \beta$. The notation ``$\angle A$" is not here meant to imply the existence of a point ``$A$," and so forth. This notation will be justified in the next section. All angles will be measured in radians. 

Consider any three great circles, with three arcs, one from each circle, forming a spherical triangle with side lengths $\alpha$, $\beta$ and $\gamma$. The constraints on the numbers $\alpha$, $\beta$ and $\gamma$ ensure that such circles exist. The circle with the arc of length $\alpha$ (resp, $\beta$, $\gamma$) will be denoted $\mathscr{C}_1$ (resp. $\mathscr{C}_2$, $\mathscr{C}_3$). For another arbitrary great circle $\mathscr{C}$, let $p_j$ and $p'_j$ denote the intersection points of $\mathscr{C}$ and $\mathscr{C}_j$ ($j=1,2,3$). Without loss of generality, moving around $\mathscr{C}$ (in one of the two directions), assume that the six intersection points occur in this cyclic order: $p_1, p_2, p_3, p'_1, p'_2, p'_3$. See Figure 1. Now ask the following. \\

\noindent {\bf Question 1.} Does there exist a great circle $\mathscr{C}$ along which the arcs from $p_1$ to $p_2$, from $p_2$ to $p_3$, and from $p_3$ to $p'_1$, have lengths $\angle C$, $\angle A$ and $\angle B$, respectively? \\

There is a comparatively simple case with a very simple answer, as follows. 

\begin{prop} 
If $\angle A$, $\angle B$ and $\angle C$ do not exceed $\pi/2$, then Question 1 is answered in the affirmative. 
\end{prop}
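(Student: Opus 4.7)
The plan is to realize the spherical configuration as the unit sphere of directions centered at the apex $P$ of a suitable Euclidean tetrahedron $PABC$, with $\mathscr{C}$ arising as the great circle cut out by the plane through $P$ parallel to the opposite face. Concretely, I would proceed in four steps. First, build a Euclidean triangle $ABC$ in a plane $\Pi\subset\mathbb{R}^3$ with interior angles $\angle A,\angle B,\angle C$; this is possible since the three angles are positive and sum to $\pi$. Second, produce a point $P\notin\Pi$ whose apex angles satisfy $\angle BPC=\alpha$, $\angle CPA=\beta$, $\angle APB=\gamma$. Third, on the unit sphere at $P$, identify $\mathscr{C}_1,\mathscr{C}_2,\mathscr{C}_3$ with the great circles cut out by the planes $PBC$, $PCA$, $PAB$; the resulting spherical triangle has sides $\alpha,\beta,\gamma$ by construction, and any other configuration of three great circles with these side lengths is related to this one by a rotation of the sphere. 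Finally, let $\mathscr{C}$ be the great circle cut out by the plane $\Pi'$ through $P$ parallel to $\Pi$.

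The intersection pattern of $\mathscr{C}$ with $\mathscr{C}_1,\mathscr{C}_2,\mathscr{C}_3$ reduces to a planar check. Because $\Pi'\parallel\Pi$, the pair $\mathscr{C}\cap\mathscr{C}_j$ consists of the two unit vectors at $P$ parallel to the $j$-th side of $ABC$. Placing the six unit vectors $\pm\overr{BC},\pm\overr{CA},\pm\overr{AB}$ on the unit circle in $\Pi'$ and using that the exterior-angle sum around $ABC$ is $2\pi$, one reads off the cyclic order $\overr{CB},\overr{CA},\overr{BA},\overr{BC},\overr{AC},\overr{AB}$ with consecutive angular gaps $\angle C,\angle A,\angle B,\angle C,\angle A,\angle B$. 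Orienting $\mathscr{C}$ accordingly, these six vectors become $p_1,p_2,p_3,p_1',p_2',p_3'$, producing arcs of lengths $\angle C,\angle A,\angle B$ from $p_1$ to $p_2$ to $p_3$ to $p_1'$, as required.

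The main obstacle is the second step, which asks whether any admissible triple $(\alpha,\beta,\gamma)$ can be realized as the apex angles of a tetrahedron with a prescribed base $ABC$. This is the synthesis form of the Perspective 3-Point problem. I would argue by continuity and degree: as $P$ recedes to infinity normal to $\Pi$, all three apex angles tend to $0$, while as $P$ approaches the circumcenter of $ABC$---which lies inside or on the boundary of the triangle precisely because $\angle A,\angle B,\angle C\le\pi/2$---the inscribed angle theorem drives $(\alpha,\beta,\gamma)\to(2\angle A,2\angle B,2\angle C)$, a point of the boundary surface $\alpha+\beta+\gamma=2\pi$. Combining these with the boundary behavior along the three spherical triangle inequalities, a degree or Jacobian argument should show that the apex-angle map sweeps surjectively onto the open region defined by $\alpha+\beta+\gamma<2\pi$ together with the triangle inequalities. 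Making this surjectivity rigorous and pinpointing exactly where the non-obtuse hypothesis is essential is where I expect the real work to lie.
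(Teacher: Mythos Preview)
Your reduction of Question~1 to a tetrahedral configuration (your steps 1, 3, 4, and the arc--length computation on $\mathscr{C}$) is exactly the content of the paper's Proposition~3, and it is carried out correctly.

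The divergence is at your step~2. The paper does not attempt a degree or surjectivity argument for the apex--angle map $P\mapsto(\angle BPC,\angle CPA,\angle APB)$. Instead it attacks the dual formulation (its Question~2: fix three concurrent lines $\ell_1,\ell_2,\ell_3$ with pairwise angles $\alpha,\beta,\gamma$ and seek $A\in\ell_1$, $B\in\ell_2$, $C\in\ell_3$ with prescribed triangle angles) by an explicit construction. One places $\ell_1,\ell_2$ in the $xy$-plane, builds a rigid copy $\triangle ABC'$ of the target triangle in that plane with $A\in\ell_1$, $B\in\ell_2$, lets it slide via a skewed trammel motion (parameter $\theta$), and then rotates $C'$ about the line $\overlr{AB}$ by an angle $\psi$ to a spatial point $\wideparen{C}$. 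Two short lemmas show: (a) if the foot $F$ of the altitude from $C'$ lies in the closed disk bounded by the circle $OAB$ and $\angle C\le\max\{\gamma,\pi-\gamma\}$, then some $(\theta,\psi)$ places $\wideparen{C}$ on the $z$-axis; (b) once $\wideparen{C}$ can reach the $z$-axis, a further continuity argument places it on any prescribed $\ell_3$. The non-obtuse hypothesis enters transparently: $\angle A,\angle B\le\pi/2$ forces $F\in\overline{AB}$ and hence $F$ lies in the disk, while $\angle C\le\pi/2$ gives $\angle C\le\max\{\gamma,\pi-\gamma\}$ for free.

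Your proposed route is not wrong in spirit, but as you yourself flag, the surjectivity claim is a genuine gap rather than a formality. You have identified the limits along the normal ray ($\to 0$) and at the circumcenter ($\to$ the face $\alpha+\beta+\gamma=2\pi$, and it is indeed here that non-obtuseness is used, since for an obtuse base the circumcenter limit lands on a face $\alpha=\beta+\gamma$ instead). But you have not analyzed the limits as $P$ approaches generic points of $\Pi$ (where one spherical triangle inequality degenerates, and \emph{which} one depends on the planar region containing the limit point), nor established properness, nor computed a degree; and since the P3P problem can have several solutions over a fixed base, the degree is not $\pm1$, so even that step needs work. The paper's trammel construction bypasses all of this and isolates precisely where each of the three inequalities $\angle A,\angle B,\angle C\le\pi/2$ is consumed.
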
 

\noindent Before proving this claim, the question posed here will be shown to be equivalent to a different question in three-dimensional real space. 

\section{A problem in solid geometry}

We continue to assume that numbers $\angle A$, $\angle B$, $\angle C$, $\alpha$, $\beta$, and $\gamma$, as in the previous section, have been fixed. Let $\ell_1$, $\ell_2$ and $\ell_3$ be three lines in $\mathbb{R}^3$, passing through the origin, with $\alpha$ and $\pi-\alpha$ being the angles between $\ell_2$ and $\ell_3$, $\beta$ and $\pi-\beta$ being the angles between $\ell_3$ and $\ell_1$, and $\gamma$ and $\pi-\gamma$ being the angles between $\ell_1$ and $\ell_2$. Because of the constraints on $\alpha$, $\beta$ and $\gamma$, such lines exist. Each line $\ell_j$ intersects the unit sphere in two antipodal points, $q_j$ and $q'_j$. The three lines taken pairwise determine three planes through the origin which then intersect the unit sphere in three great circles $\mathscr{C}_1$, $\mathscr{C}_2$, $\mathscr{C}_3$, yielding the preliminary setup described in Section 1. 

\begin{figure}[ht]
\vspace{-6mm}
  \centering
  \includegraphics[width=5cm]{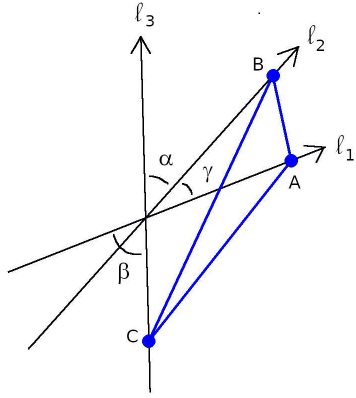} 
  \caption{The solid geometry problem}
  \label{fig:fig2}
\end{figure}

Next consider any three distinct points $A$, $B$ and $C$, with $A$ on $\ell_1$, $B$ on $\ell_2$, and $C$ on $\ell_3$. Consider the triangle $\triangle ABC$. See Figure 2. Now ask the following. \\

\noindent {\bf Question 2.} Can $A$, $B$ and $C$ be selected so that the interior angle of $\triangle ABC$ at $A$ is $\angle A$, the interior angle at $B$ is $\angle B$, and the angle at $C$ is $\angle C$? \\

This leads to a claim somewhat similar to that in Section 1, and also a connection between these two claims. 

\begin{prop} 
If $\angle A$, $\angle B$ and $\angle C$ do not exceed $\pi/2$, then Question 2 is answered in the affirmative. 
\end{prop}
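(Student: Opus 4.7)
The idea is to parametrize candidate triangles $\triangle ABC$ inscribed in the three lines and, by a continuity/degree argument on the resulting ``shape map,'' show that its image covers the acute subsimplex of the angle simplex. First I would select unit vectors $\hat u_j\in\ell_j$ so that $\hat u_2\cdot\hat u_3=\cos\alpha$, $\hat u_3\cdot\hat u_1=\cos\beta$, and $\hat u_1\cdot\hat u_2=\cos\gamma$; such a choice exists since the hypotheses on $\alpha,\beta,\gamma$ ensure that the Gram matrix $[\hat u_i\cdot\hat u_j]$ is positive semidefinite. Set $A=s\hat u_1$, $B=t\hat u_2$, $C=u\hat u_3$ with $s,t,u>0$, and, since the shape of $\triangle ABC$ is scale-invariant, normalize $s=1$. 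The law of cosines then yields explicit smooth formulas for $\cos\angle A,\cos\angle B,\cos\angle C$ as rational functions of $(t,u)$ and of $\alpha,\beta,\gamma$, defining the shape map
\[
F\colon (0,\infty)^2 \longrightarrow \Delta,\qquad (t,u)\longmapsto(\angle A,\angle B,\angle C),
\]
where $\Delta$ is the open simplex of positive angle triples summing to $\pi$. The goal is to show that the image of $F$ contains every triple with all components at most $\pi/2$.

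Next I would analyze $F$ on the compactified parameter space $[0,\infty]^2$. Direct computation of the dot products $\vec{BA}\cdot\vec{BC}$ and cyclic analogues gives the boundary limits: as $t\to 0$ the vertex $B$ collapses to $O$ and $\angle B\to\beta$ (with $\triangle OAC$ as the limiting figure); as $u\to 0$, $\angle C\to\gamma$; as $t\to\infty$ the edges at $B$ become nearly parallel, so $\angle B\to 0$; as $u\to\infty$, $\angle C\to 0$; and in the joint limit $t,u\to\infty$ with $u/t\to\mu$ one obtains $\angle A\to\alpha$. Thus on the topological boundary of $[0,\infty]^2$ the extension $\bar F$ takes values along a union of curves on each of which at least one of the three angles equals either $0$ or one of $\alpha,\beta,\gamma$.

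To finish, I would combine three ingredients: \emph{non-emptiness} of the image in the acute region, verified by computing $F(1,1)$ explicitly and checking that each of $\cos\angle A$, $\cos\angle B$, $\cos\angle C$ is positive; \emph{local surjectivity} of $F$ at regular points, via invariance of domain; and \emph{properness in the acute interior}, i.e.\ that any sequence of parameters whose $F$-images converge to an interior acute triple must have a subsequence converging in $(0,\infty)^2$. Together these force the set of achievable acute triples to be nonempty, open, and relatively closed inside the interior of the acute subsimplex, so it equals that interior; right-angled boundary cases then follow by continuity.

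The hard part will be the properness argument when one or more of $\alpha,\beta,\gamma$ is itself at most $\pi/2$, since then the limiting boundary curves of $\bar F$ may pass through the interior of the acute region, so the mere fact that a parameter sequence goes to the boundary does not immediately contradict the target being acute. Overcoming this requires a refined Brouwer-degree computation --- tracking how the boundary circle of $[0,\infty]^2$ winds around a chosen acute target under $\bar F$ --- which should come out to $\pm 1$, giving surjectivity outright and completing the argument.
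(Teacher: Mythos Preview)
Your outline is a genuinely different strategy from the paper's, which builds the desired triangle directly via an extended Sullivan ``trammel'' construction: fix a triangle of the right shape with $A\in\ell_1$, $B\in\ell_2$, slide it by a one-parameter rigid motion, and rotate the third vertex about $\overleftrightarrow{AB}$; the Inscribed Angle Theorem on the circle $OAB$ then reduces the question to whether the foot $F$ of the altitude from $C$ lies in that circle, which is immediate when all three angles are at most $\pi/2$. That argument is short and elementary once the construction is set up.

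Your proposal, by contrast, has two concrete gaps. First, the non-emptiness step is false as stated: $F(1,1)$ need not be acute. With $A=\hat u_1$, $B=\hat u_2$, $C=\hat u_3$ one computes
\[
\cos\angle A \;=\; \frac{1-\cos\beta-\cos\gamma+\cos\alpha}{2\sqrt{(1-\cos\beta)(1-\cos\gamma)}},
\]
and taking $\beta=\gamma$ small with $\alpha$ just below $2\beta$ gives numerator $1-2\cos\beta+\cos2\beta=2\cos\beta(\cos\beta-1)<0$, so $\angle A>\pi/2$. Thus you cannot seed the open-closed connectedness argument at $(1,1)$ in general, and you have offered no alternative seed.

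Second, and more seriously, the entire weight of the argument rests on the Brouwer-degree computation you defer at the end. You already observe that when some of $\alpha,\beta,\gamma$ are at most $\pi/2$, the boundary image $\bar F(\partial[0,\infty]^2)$ can cross the interior of the acute region; in that case the degree with respect to an acute target is not even defined without further work, let alone equal to $\pm1$. Saying it ``should come out to $\pm1$'' is not a proof. Moreover, you have silently restricted to the single octant $s,t,u>0$; changing the sign of one parameter replaces $(\alpha,\beta,\gamma)$ by a triple with two entries reflected through $\pi/2$, and nothing in your outline explains why the positive octant alone already covers every acute target. Until the boundary/degree analysis is actually carried out (or replaced), the argument is incomplete.
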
 

\begin{prop}

Questions 1 and 2 are equivalent in that both depend only on the numbers $\angle A$, $\angle B$, $\angle C$, $\alpha$, $\beta$, and $\gamma$, and the answers to these questions are the same. Thus, Propositions 1 and 2 are equivalent too. 

\end{prop}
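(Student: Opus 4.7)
\noindent\emph{Proof proposal.} The plan is to exhibit a direct geometric correspondence between solutions of the two questions. Given a triangle $\triangle ABC$ solving Question 2, the unique plane $\Pi$ through the origin $O$ parallel to $\triangle ABC$ cuts the unit sphere in a great circle $\mathscr{C}$ that will be shown to solve Question 1; and conversely, if $\mathscr{C}$ solves Question 1, any affine plane parallel to the plane $\Pi$ of $\mathscr{C}$ (but at nonzero distance from $O$) meets $\ell_1,\ell_2,\ell_3$ in a triangle that solves Question 2.

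For the forward direction, suppose $A\in\ell_1$, $B\in\ell_2$, $C\in\ell_3$ satisfy Question 2. The plane of $\triangle ABC$ does not pass through $O$, for otherwise $\ell_1,\ell_2,\ell_3$ would be coplanar, contradicting the existence of a nondegenerate spherical triangle with sides $\alpha,\beta,\gamma$. Let $\Pi$ be the plane through $O$ parallel to this plane and put $\mathscr{C}:=\Pi\cap S^2$. Since the plane of $\mathscr{C}_1$ is spanned by $\ell_2,\ell_3$ and hence contains the segment $BC$, its intersection with $\Pi$ is the line through $O$ parallel to $BC$, so $\mathscr{C}\cap\mathscr{C}_1$ is the antipodal pair of unit vectors parallel to $\overr{BC}$. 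Analogously $\mathscr{C}\cap\mathscr{C}_2$ and $\mathscr{C}\cap\mathscr{C}_3$ are the antipodal pairs parallel to $\overr{CA}$ and $\overr{AB}$. Walking once around $\triangle ABC$ the directed tangent rotates through exterior angles $\pi-\angle A,\pi-\angle B,\pi-\angle C$, totalling $2\pi$; translating the six oriented side-vectors to $O$ therefore partitions $\mathscr{C}$ into six arcs whose lengths are the interior angles $\angle A,\angle B,\angle C$, each appearing twice. Taking $p_1$ to be the representative of $\mathscr{C}\cap\mathscr{C}_1$ parallel to $\overr{BC}$, and selecting representatives of the other two pairs by traversing $\mathscr{C}$ in the appropriate rotational sense, the successive arcs $p_1p_2,\ p_2p_3,\ p_3p'_1$ come out with lengths $\angle C,\angle A,\angle B$, exactly as required by Question 1. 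The identity $\angle A+\angle B+\angle C=\pi$ is what guarantees that $p'_1$, reached after three arcs, is genuinely antipodal to $p_1$.

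For the converse, let $\mathscr{C}$ solve Question 1 with plane $\Pi$. No $\ell_j$ can lie in $\Pi$, since that would force two of the arcs of Question 1 to collapse to $0$ or $\pi$, contradicting $0<\angle A,\angle B,\angle C<\pi$. Hence every affine plane parallel to $\Pi$ at nonzero distance from $O$ meets each $\ell_j$ in exactly one point; the three points form a triangle, and the identical calculation above, run in reverse, shows that its interior angles are $\angle A,\angle B,\angle C$. This establishes the equivalence, and Propositions~1 and~2 become equivalent as an immediate corollary.

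The step requiring the most care is the cyclic-order bookkeeping: verifying that the labelling of the six intersection points induced by the directed sides of $\triangle ABC$ really produces the arcs in the specific order $\angle C,\angle A,\angle B$ demanded by Question 1, rather than some permutation of them. I would dispatch this with a single polar computation in $\Pi$, placing $\overr{AB}$ along the positive $x$-axis so that the six unit vectors $\pm\overr{AB}/|AB|,\ \pm\overr{BC}/|BC|,\ \pm\overr{CA}/|CA|$ occupy polar angles $0,\angle A,\pi-\angle B,\pi,\pi+\angle A,2\pi-\angle B$ (using $\angle A+\angle B+\angle C=\pi$); traversing $\mathscr{C}$ clockwise starting from the representative parallel to $\overr{BC}$ then reads off arcs of lengths $\angle C,\angle A,\angle B$ in exactly the prescribed order, which is the identification needed to close both directions of the argument.
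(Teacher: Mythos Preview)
Your argument is correct and follows essentially the same route as the paper: translate each sideline of $\triangle ABC$ to the origin (equivalently, intersect the plane through $O$ parallel to $\triangle ABC$ with the sphere) and identify the resulting antipodal pairs with the intersections $\mathscr{C}\cap\mathscr{C}_j$. You are in fact more thorough than the paper, which sketches only the forward direction and dismisses the arc-length identification with ``it is immediately seen''; your explicit treatment of the converse, the non-degeneracy checks, and the polar computation verifying the cyclic order $\angle C,\angle A,\angle B$ fill in details the paper leaves to the reader.
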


\begin{proof}[Proof of Prop. 3]

Concerning the setup in this section, the sideline $\overleftrightarrow{BC}$ of $\triangle ABC$, when parallel translated so as to pass through the origin, yields a line $\mathscr{L}_1$ that intersects the unit sphere in two antipodal points $p_1$ and $p'_1$. These points are concyclic with $q_2$, $q_3$, $q'_2$ and $q'_3$, with their common circle in the plane containing the lines $\mathscr{L}_1$, $\ell_2$, $\ell_3$. Likewise, $\overleftrightarrow{CA}$ ($\overleftrightarrow{AB}$) yields a line $\mathscr{L}_2$ ($\mathscr{L}_3$) and antipodal points $p_2$ and $p'_2$ ($p_3$ and $p'_3$). 

The six points $p_1$, $p'_1$, $p_2$, $p'_2$, $p_3$ and $p'_3$ lie on a great circle of the unit sphere that is also in a plane parallel to the plane containing the triangle $\Delta ABC$. We may assume that going around this circle, the points occur in the cyclic order $p_1, p_2, p_3, p'_1, p'_2, p'_3$, as before. It is immediately seen that the distance along the arc connecting $p_1$ and $p_2$ equals the interior angle of $\triangle ABC$ at $C$, and so forth. Thus we achieve the complete setup in Section 1, manifesting the claimed connection between the two questions. 

\end{proof}

\section{Extending Sullivan's Construction}

Before proving Proposition 2, a framework for exploring Question 2 will now be developed, without yet assuming that $\angle A$, $\angle B$ and $\angle C$ are bounded above by $\pi/2$. Without loss of generality, assume that $\ell_1$ is the $x$-axis, that $\ell_2$ is also in the $xy$-plane, but $\ell_3$ is not in the $xy$-plane. Further assume that rotating the positive $x$-axis counterclockwise by the angle $\gamma$ results in a ray along the line $\ell_2$. Now, choose a positive number $a$, and let $b = a \csc \angle A \sin \angle B$ and $c = a \csc \angle A \sin \angle C$.

\begin{figure}[ht]
  \centering
  \includegraphics[width=8cm]{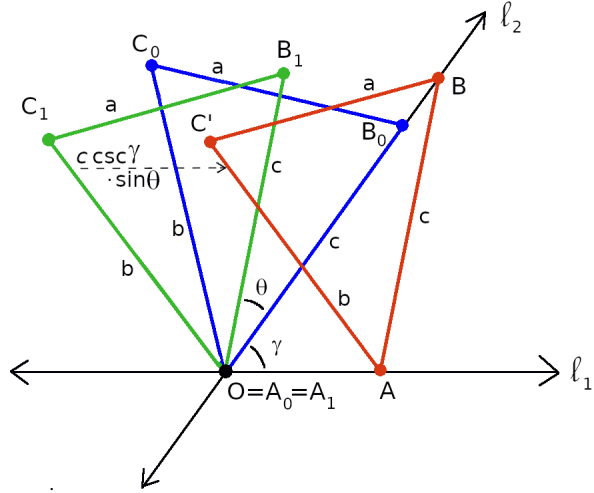} 
  \caption{Sullivan's Construction}
  \label{fig:fig3}
\end{figure}

Let $B_0$ be the point along this ray whose coordinates are $(c \cos \gamma, c \sin \gamma, 0)$. Let $C_0$ be the point whose coordinates are $(b \cos (\angle A+\gamma), b \sin(\angle A+\gamma), 0)$. Let $A_0 = O$ (the origin). A direct check shows that $\angle C_0A_0B_0 = \angle A$ , $\angle A_0B_0C_0 = \angle B$ , $\angle B_0C_0A_0 = \angle C$ , \linebreak dist($B_0$, $C_0$) = $a$ , dist($C_0$, $A_0$) = $b$ and \ dist($A_0$, $B_0$) = $c$. Now, rotate $B_0$ and $C_0$ about the origin by an arbitrary signed angle $\theta$ to get points $B_1$ and $C_1$. Let $A_1 = A_0 = O$. Next, translate $A_1$, $B_1$ and $C_1$ along the $x$ direction by an amount $c \csc \gamma \sin \theta$ to get the points $A$, $B$ and $C'$. Notice that $\angle C'AB = \angle A$ , $\angle ABC' = \angle B$ , $\angle BC'A = \angle C$ , dist($B$, $C'$) = $a$ , dist($C'$, $A$) = $b$ , dist($A$, $B$) = $c$. Also notice that $A$ is on $\ell_1$, $B$ is on $\ell_2$, but $C'$ is not on $\ell_3$, since it is in the $xy$-plane. See Figure 3. 

The construction thus far is discussed in \cite{W}, where it is credited to John M. Sullivan. By varying $\theta$, we will regard that $A$, $B$ and $C'$ are ``moving points." As $A$ is allowed to slide back and forth along $\ell_1$ (crossing $O$), and $B$ does likewise along $\ell_2$, keeping a distance $c$ from $A$, these two points move in a manner reminiscent of the {\it Trammel of Archimedes}, a skewed version of it actually (cf. \cite{W}). This mathematical model will now be extended to obtain a three-dimension method for producing the desired triangle to fit the three given lines. 

Let $F$ be the projection of $C'$ onto the line $\overleftrightarrow{AB}$, and let $C''$ be the reflection of $C'$ about the line $\overleftrightarrow{AB}$. Now, consider rotating $C'$ in three dimensions, about the line $\overleftrightarrow{AB}$ by a signed angle $\psi$ to obtain a point $\wideparen{C}$. When $\psi = 0$, $\wideparen{C} = C'$, of course, but when $\psi = \pi$, $\wideparen{C} = C''$. Moreover, for arbitrary $\psi$, the orthogonal projection of $\wideparen{C}$ onto the $xy$-plane is on the line segment $\overline{C'C''}$. Notice too that the triangles $\Delta A_0B_0C_0$, $\Delta A_1B_1C_1$, $\Delta ABC'$, $\Delta ABC''$ and $\Delta AB\wideparen{C}$ are all conguent. Also, by varying both $\theta$ and $\psi$, we produce all possible triangles $\Delta AB\wideparen{C}$ that are congruent to $\Delta A_0B_0C_0$, and for which $A$ is on $\ell_1$ and $B$ is on $\ell_2$. 

\begin{figure}[ht]
  \vspace{2mm}
  \centering
  \includegraphics[width=7cm]{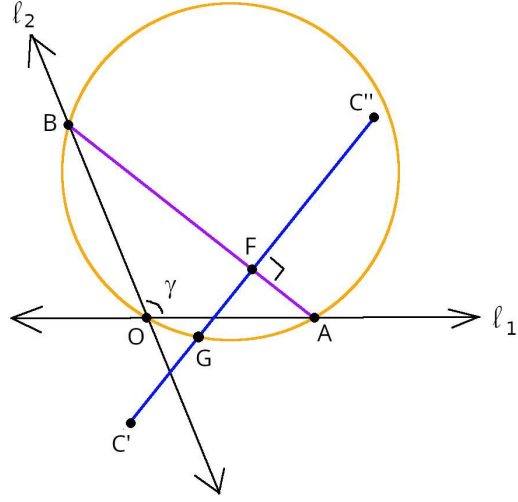} 
  \caption{3D Extension Analysis (in 2D)}
  \label{fig:fig4}
\end{figure}

Now, with $A$ and $B$ fixed at the moment, and with neither being $O$, consider the circle $OAB$, as in Figure 4. At each point $P$ on this circle, other than $A$ or $B$, the angle $\angle APB$ equals either $\gamma$ or $\pi - \gamma$ (by the Inscribed Angle Theorem). If we now slide $A$ and $B$, by continuously changing $\theta$, the same rigid motion that moves the segment $\overline{AB}$ will also move the circle in such a way that the resulting circle will still pass through all three of the points $O$, $A$ and $B$, even though $O$ is stationary. The formula for this rigid motion is $(x,y) \mapsto (x \cos\theta - y \sin\theta + c \csc\gamma \sin\theta \, , \, x \sin\theta + y \cos\theta)$. Moreover, the line segment $\overline{C'C''}$ also moves according to this same rigid transformation. 

Though they will not be needed in the rest of this paper, here are the coordinates of the points discussed thus far:

\vspace{3mm}

{\small $\begin{array}{ccl}
B_0 & : \; & ( \, c \cos \gamma , \; c \sin \gamma , \; 0 \, ) \\ 
C_0 & : \; & ( \, b \cos(\angle A + \gamma) , \; b \sin(\angle A + \gamma) , \; 0 \, ) \\ 
B_1 & : \; & ( \, c \cos(\gamma+\theta) , \; c \sin(\gamma+\theta) , \; 0 \, ) \\ 
C_1 & : \; & ( \, b \cos(\angle A +\gamma + \theta) , \; c \sin(\angle A + \gamma+\theta) , \; 0 \, ) \\ 
A & : \; & ( \, c \csc \gamma \sin \theta , \; 0 , \; 0 \, ) \\ 
B & : \; & ( \, c \cos(\gamma+\theta) + c \csc \gamma \sin \theta , \; c \sin(\gamma+\theta) , \; 0 \, ) \\ 
C' & : \; & ( \, b \cos(\angle A + \gamma+\theta) + c \csc \gamma \sin \theta , \; b \sin(\angle A + \gamma+\theta) , \; 0 \, ) \\ 
C'' & : \; & ( \, b \cos(-\angle A + \gamma+\theta) + c \csc \gamma \sin \theta , \; b \sin(-\angle A + \gamma+\theta) , \; 0) \\ 
\wideparen{C} & \; : \; & ( \, b \cos\angle A \cos(\gamma+\theta) + c \csc \gamma \sin \theta - b \sin \angle A \sin(\gamma+\theta) \cos \psi, \\ 
\ & \ & ( \, b \cos\angle A \sin(\gamma+\theta) + b \sin \angle A \cos(\gamma+\theta) \cos \psi, \; b \sin A \sin \psi \, ) \\ 
F & \; : \; & ( b \cos\angle A \cos(\gamma+\theta) + c \csc \gamma \sin \theta, \;  b \cos\angle A \sin(\gamma+\theta), \; 0 \, ). \\ 
\end{array}$ } 

\vspace{3mm} 

\noindent These coordinates could be useful in better understanding the framework, and perhaps in implementing demonstrations of it. Also, the circle containing $O$, $A$ and $B$ has the equation $x^2 + y^2 = c \csc\gamma \, (\sin\theta \, x + \cos\theta \, y)$. A couple lemmas will be helpful, primarily in proving Proposition 2, which is concerned with the existence of a certain acute or right $\Delta ABC$, but these lemmas also provide insight into the obtuse $\Delta ABC$ case.   \\

\begin{lem}
The following are equivalent: 
\begin{itemize}
\item[(i)] There exist values for $\theta$ and $\psi$ such that $\wideparen{C}$ lies on the $z$-axis; 
\item[(ii)] The line segment $\overline{C'C''}$ intersects the circle $OAB$ (for any value of $\theta$). 
\end{itemize}
\noindent The following are also equivalent: 
\begin{itemize} 
\item[(iii)] $F$ is on or inside the circle $OAB$, and $\angle C \le \max\{\gamma, \pi-\gamma\}$;
\item[(iv)] $F$ is on or inside the circle $OAB$, and $\cos \gamma \le \cos \angle C$ or $\cos \gamma \ge -\cos \angle C$. 
\end{itemize}
\noindent Moreover, (iii) implies (ii). 
\end{lem}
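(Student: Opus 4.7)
The plan is to split the lemma into its three claims: the algebraic equivalence (iii) $\Leftrightarrow$ (iv), a rigid-motion argument for (i) $\Leftrightarrow$ (ii), and a metric calculation for (iii) $\Rightarrow$ (ii). The equivalence (iii) $\Leftrightarrow$ (iv) is immediate: since cosine is strictly decreasing on $[0,\pi]$, we have $\angle C\le\gamma\iff\cos\gamma\le\cos\angle C$ and $\angle C\le\pi-\gamma\iff\cos\gamma\ge-\cos\angle C$, so the condition $\angle C\le\max\{\gamma,\pi-\gamma\}$ is precisely the ``or'' clause of (iv).

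For (i) $\Leftrightarrow$ (ii) I would exploit the observation already emphasised in the text that, as $\theta$ varies, the chord $\overline{AB}$, the segment $\overline{C'C''}$, and the circle $OAB$ are all moved by the single rigid motion $(x,y)\mapsto(x\cos\theta-y\sin\theta+c\csc\gamma\sin\theta,\,x\sin\theta+y\cos\theta)$. In particular the predicate ``$\overline{C'C''}$ meets circle $OAB$'' is $\theta$-invariant, which is exactly what the parenthetical in (ii) asserts. Passing to a frame comoving with this motion, the segment and circle are frozen, while the lab-fixed origin has comoving coordinates $\bigl(-\tfrac{c}{2}\csc\gamma\sin 2\theta,\,\tfrac{c}{2}\csc\gamma(1-\cos 2\theta)\bigr)$, which parametrises all of circle $OAB$ exactly once as $\theta$ sweeps through $[0,\pi]$. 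Meanwhile, $\wideparen{C}$ lies on the $z$-axis precisely when its $xy$-projection equals $O$, and (as the text notes) this projection traces the segment $\overline{C'C''}$ as $\psi$ varies. Combining the two observations, (i) is equivalent to the existence of some $\theta$ with $O\in\overline{C'C''}$, which is in turn equivalent to the (comoving-fixed) segment meeting the (comoving-fixed) circle, i.e.\ (ii).

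The substance of the lemma lies in (iii) $\Rightarrow$ (ii). Its first clause forces $F\in\overline{AB}$, since $A$ and $B$ are the only points of $\overleftrightarrow{AB}$ on the circle. Introduce planar coordinates with the midpoint of $\overline{AB}$ at the origin and $\overleftrightarrow{AB}$ as the $x$-axis; then the circle has centre $(0,d)$ with $R=(c/2)\csc\gamma$ and $|d|=(c/2)|\cot\gamma|$, while $\overline{C'C''}$ is the vertical segment of half-length $b\sin\angle A$ centred at $(f,0)$, with $f$ the signed position of $F$ along the chord. The perpendicular at $F$ meets the circle at heights $d\pm\sqrt{R^2-f^2}$, so segment and circle intersect iff $\sqrt{R^2-f^2}-|d|\le b\sin\angle A$. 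Squaring and substituting the power-of-a-point identity $(c/2)^2-f^2=|FA||FB|=ab\cos\angle A\cos\angle B$, the law-of-sines identity $b\sin\angle A=a\sin\angle B$, and $\cos(\angle A+\angle B)=-\cos\angle C$, this reduces to $-a\cos\angle C\le 2|d|\sin\angle A$. The latter holds automatically when $\angle C\le\pi/2$, and in the remaining case becomes $|\cot\angle C|\le|\cot\gamma|$ after one more application of the law of sines. By monotonicity of $\cot$ on each half of $(0,\pi)$, this final inequality is exactly $\angle C\le\max\{\gamma,\pi-\gamma\}$---the second clause of (iii). Hence (iii) $\Rightarrow$ (ii).

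The anticipated main obstacle is the sign bookkeeping in this metric step: verifying that $\sqrt{R^2-f^2}-|d|$ really is the smaller of $|d\pm\sqrt{R^2-f^2}|$ for any sign of $d$, keeping track that $C'$ may lie on either side of $\overleftrightarrow{AB}$, and matching the cotangent inequality to $\max\{\gamma,\pi-\gamma\}$ across the sub-cases $\gamma<\pi/2$ and $\gamma>\pi/2$. These are routine manipulations but easy to mis-handle.
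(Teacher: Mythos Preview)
Your proof is correct. For (iii) $\Leftrightarrow$ (iv) and for (i) $\Leftrightarrow$ (ii) you follow essentially the paper's route; your explicit parametrisation of $O$'s path in the comoving frame is a concrete version of the paper's observation that $O$ sweeps the whole circle (the paper phrases this as ``$O$ passes through $A$ and $B$ alternately, hence through $G$'').

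The genuine difference is in (iii) $\Rightarrow$ (ii). The paper dispatches this in one line using the Inscribed Angle Theorem: the circle $OAB$ is the locus $\{P:\angle APB\in\{\gamma,\pi-\gamma\}\}$, and since $C'$ and $C''$ lie on opposite sides of $\overleftrightarrow{AB}$, one of them sits on the arc whose inscribed angle is $\max\{\gamma,\pi-\gamma\}$; the hypothesis $\angle C\le\max\{\gamma,\pi-\gamma\}$ then forces that endpoint to be on or outside the circle, while $F$ (the midpoint of $\overline{C'C''}$) is on or inside, so the segment crosses the circle. Your approach instead computes the intersection condition metrically and reduces it, via power-of-a-point and the law of sines, to $|\cot\angle C|\le|\cot\gamma|$. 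Both arguments are valid; the paper's is shorter and more geometric, while yours has the advantage of producing an explicit inequality that makes the role of the second clause of (iii) quantitatively visible and avoids the side-of-line casework entirely. The sign bookkeeping you flag as a worry is fine: once $F$ is on or inside the circle you have $s\ge|d|$, so $\min\{|d+s|,|d-s|\}=s-|d|$ regardless of the sign of $d$.
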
 

\begin{proof} 

\vspace{3mm} 

As $\theta$ is continuously increased (or decreased), the circle $OAB$ and the line segments $\overline{AB}$ and $\overline{C'C''}$ will all undergo the same continuous rigid motion. Suppose for a moment that $\overline{C'C''}$ intersects the circle at a point $G$. As the circle and line segments move, the point $G$ moves with them, and follows an elliptic path (by a theorem of Schooten; see \cite{W}) in the $xy$-plane. It must also remain concyclic with $O$, $A$ and $B$. 

Consider now a reference frame (rigidly moving coordinate system) in which $A$, $B$, $C'$, $C''$, $F$ and $G$ are stationary. Here $O$ becomes a moving point (no longer regarded as the origin), and moves around the circle $ABG$.\footnote{It moves twice around this circle each time the objects go around once in the original 3-space frame.} Clearly, $O$ passes through $A$, and passes through $B$, and these events alternate (strictly). Thus it is clear that $O$ passes through $G$ as well. That is, there exists a value for $\theta$ that makes $G = O$. Since $G$ lies on the line segment $\overline{C'C''}$, it is the orthogonal projection of some $\wideparen{C}$ onto the $xy$-plane. Since $G = O$, this $\wideparen{C}$ is on the $z$-axis of the original reference frame. Therefore, ({\it ii}) implies ({\it i}). (The inverse of the rigid motion discussed earlier is $(x,y) \mapsto ( \; (x-c\csc\gamma\sin\theta) \cos\theta + y \sin\theta + c \csc\gamma \sin\theta \, , \, -(x-c\csc\gamma\sin\theta) \sin\theta + y \cos\theta \; )$, which can be used to describe the path now followed by the moving point $O$, by setting $x=y=0$.)

Conversely, assuming ({\it i}), the orthogonal projection of this $\wideparen{C}$ onto the $xy$-plane is $O$. Thus, $O$ must lie on $\overline{C'C''}$, and obviously it lies on the circle $OAB$, and hence $\overline{C'C''}$ and the circle $OAB$ intersect. Since these objects move together as $\theta$ varies, they will still intersect, for any value of $\theta$. Thus, ({\it i}) implies ({\it ii}). The fact that ({\it iii}) implies ({\it ii}) follows after observing that the circle $OAB$ consists of the points $P$ for which $\angle APB$ is either $\gamma$ or $\pi - \gamma$, so either $C'$ or $C''$ is outside the circle. The equivalence of ({\it iii}) and ({\it iv}) is straightforward to check.

\end{proof} 

\begin{lem}
If condition $(ii)$ in Lemma 4 holds, then points $A$, $B$ and $C$, as described in Proposition 2, exist. \\
\end{lem}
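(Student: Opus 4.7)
My plan is to reduce the existence problem to a one-variable continuity argument in the Sullivan parameter $\theta$ and then to invoke the intermediate value theorem. First I would note that for each fixed $\theta$, as $\psi$ varies, the point $\wideparen{C}(\theta,\psi)$ traces a circle $\mathcal{C}_\theta$ of radius $b\sin\angle A$ centered at $F(\theta)$ and lying in the vertical plane $\Pi(\theta)$ perpendicular to $\overleftrightarrow{AB}$. Since $\ell_3$ is not contained in the $xy$-plane, for a generic $\theta$ the line $\ell_3$ meets $\Pi(\theta)$ transversally in a single point $Q(\theta)$, and the existence of a $\psi$ producing $\wideparen{C}\in\ell_3$ becomes equivalent to the condition $Q(\theta)\in\mathcal{C}_\theta$, i.e., to the vanishing of the continuous function
\[
h(\theta)\;:=\;|Q(\theta)-F(\theta)|^2-(b\sin\angle A)^2
\]
on $S^1$.

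Next I would use Lemma 4 to locate a distinguished $\theta^*$ with $h(\theta^*)\le 0$. By condition $(ii)$ and the equivalence $(ii)\Leftrightarrow(i)$, there is a $\theta^*$ at which the segment $\overline{C'C''}$ passes through the origin $O$, forcing $OF(\theta^*)\perp\overleftrightarrow{AB}$ and hence $O\in\Pi(\theta^*)$. Since $\ell_3$ passes through $O$, this gives $Q(\theta^*)=O$ and so $h(\theta^*)=|OF(\theta^*)|^2-(b\sin\angle A)^2$. Lemma 4(i) produces a point $\wideparen{C}^*$ on the $z$-axis lying on $\mathcal{C}_{\theta^*}$; either $\wideparen{C}^*=O$, in which case the (non-degenerate) triangle $\triangle A^*B^*O$ already realizes the configuration required by Proposition 2 (as $O\in\ell_3$), or else $|OF(\theta^*)|<b\sin\angle A$, giving $h(\theta^*)<0$.

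In the latter case it remains to exhibit a $\theta_1$ with $h(\theta_1)>0$ and apply the intermediate value theorem on $S^1$. Natural candidates for $\theta_1$ are values at which $F(\theta)$ is far from $O$ along the Schooten ellipse traced by $F$ as $\theta$ sweeps a period; at such $\theta_1$, the intersection $Q(\theta_1)=\ell_3\cap\Pi(\theta_1)$ typically falls outside $\mathcal{C}_{\theta_1}$, reflecting the fact that $\ell_3$ is a single line through the origin while $F(\theta)$ ranges over a whole ellipse. The main obstacle will be rigorously establishing this sign change: this requires analyzing the interplay between the elliptic motion of $F(\theta)$, the rotation of the plane $\Pi(\theta)$, and the fixed position of $\ell_3$, most likely via the moving-frame viewpoint introduced in the proof of Lemma 4, together with the constraints relating $\alpha,\beta,\gamma$ and $\angle A,\angle B,\angle C$.
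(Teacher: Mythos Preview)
Your reduction to the scalar function $h(\theta)=|Q(\theta)-F(\theta)|^{2}-(b\sin\angle A)^{2}$ and the verification that $h(\theta^{*})\le 0$ are correct, but the proof is incomplete precisely where you say it is: you never produce $\theta_{1}$ with $h(\theta_{1})>0$, and your proposed heuristic---look where $F$ is far from $O$ on its Schooten ellipse---is aimed at the wrong quantity. What makes $h$ large is $|Q(\theta)-F(\theta)|$, and $Q(\theta)$ escapes to infinity exactly when $\ell_{3}$ becomes parallel to the vertical plane $\Pi(\theta)$; since the normal to $\Pi(\theta)$ is $(\cos(\gamma+\theta),\sin(\gamma+\theta),0)$, this happens whenever $m\cos(\gamma+\theta)+n\sin(\gamma+\theta)=0$, which always has solutions provided $(m,n)\ne(0,0)$. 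Near such a $\theta$ one has $h\to+\infty$, and the intermediate value theorem on an interval from $\theta^{*}$ to the nearest such pole then finishes the job. You also omit the degenerate cases $\ell_{3}=z$-axis (where $Q(\theta)$ is never a single point and your $h$ is undefined) and $\ell_{3}\subset\Pi(\theta^{*})$ (where one gets an intersection with $\mathcal{C}_{\theta^{*}}$ directly, since $O$ lies inside that circle).

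The paper's proof is essentially this same continuity argument in more geometric clothing, and it locates the ``far'' value of $\theta$ exactly where your $h$ has its poles. After handling $\ell_{3}=z$-axis separately, the paper observes that at $\theta^{*}$ the line $\ell_{3}$ passes through the open disk bounded by $\mathcal{C}_{\theta^{*}}$ (because $O$ lies in that disk), and then rotates the segment $\overline{C'C''}$ until it is parallel to the projection $\ell$ of $\ell_{3}$ onto the $xy$-plane. At that moment $\Pi(\theta)$ is parallel to $\ell_{3}$, so generically $\ell_{3}$ misses $\mathcal{C}_{\theta}$ entirely, and continuity of the configuration forces an intermediate $\theta$ at which $\ell_{3}$ actually meets $\mathcal{C}_{\theta}$. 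Thus the paper tracks the binary topological condition ``does $\ell_{3}$ link $\mathcal{C}_{\theta}$?'' rather than your singular scalar $h$; this sidesteps the analytic bookkeeping but is otherwise the same idea, and the missing $\theta_{1}$ in your argument is precisely the paper's ``parallel'' value of $\theta$.
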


\begin{proof}

If $\ell_3$ is the $z$-axis, then the claim here follows immediately from Lemma 4, by simply choosing $C$ to be the $\wideparen{C}$ on the $z$-axis. Now, assume that $\ell_3$ is not the $z$-axis. Express the line $\ell_3$ parametrically as $x = m z$ and $y = n z$, for constants $m$ and $n$, not both zero. Again, for some $\theta$ and some $\psi$, the corresponding $\wideparen{C}$ is along the $z$-axis. Keeping this $\theta$ fixed, vary $\psi$ to get a vertical circle ({\it i.e.} orthogonal to the $xy$-plane) of points $\wideparen{C}$. The line $\ell_3$ either intersects this circle (if it is in the same plane as the circle), or passes through the circle. In the former case, we will have again found a suitable choice for $C$. 

Now assume the latter case. If we now change $\theta$, the vertical circle of possible $\wideparen{C}$ can be made to lie in a vertical plane that is parallel or equal to the vertical plane containing $\ell_3$. To see this, consider the orthogonal projections onto the $xy$-plane. The circle of possible $\wideparen{C}$ projects onto the segment $\overline{C'C''}$. The line $\ell_3$ projects onto a line $\ell$ in the $xy$-plane, given by the equation $n x = m y$. The segment $\overline{C'C''}$ makes a complete rotation as $\theta$ varies, and so it can be made either to lie on $\ell$ or to be parallel to $\ell$. In the former case, we would again have $\ell_3$ intersecting the circle of possible $\wideparen{C}$, and so again a suitable point $C$ would be obtained. 

Now assume the latter case. For this value of $\theta$, the line $\ell_3$ does not pass through the vertical circle. But for the value of $\theta$ considered earlier, $\ell_3$ did pass through the circle. By continuity, there must be a value of $\theta$ for which $\ell_3$ intersects the circle, once again giving us a point $\wideparen{C}$ that will serve as a suitable choice for $C$. 

\end{proof}

We are now prepared to prove Proposition 2, which together with Proposition 3, also will establish Proposition 1. 

\begin{proof}[Proof of Prop. 2]  

Notice that $\angle A \le \pi /2$ and $\angle B \le \pi /2$ ensure that the segments $\overline{AB}$ and $\overline{C'C''}$ intersect, at $F$.  Since $\overline{AB}$ is a chord of the circle $OAB$, $F$ is in the interior of the circle, unless $F = A$ or $F = B$. Since $\angle C \le \pi /2$, it follows that either $\angle C \le \gamma$ or $\angle C \le \pi - \gamma$. The proposition then follows quickly from the two lemmas. 

\end{proof}

\section{Afterthoughts}

Suppose that points $A$, $B$ and $C$ are selected points on $\ell_1$, $\ell_2$ and $\ell_3$, respectively, as in Section 2. If the three lines happen to be orthogonal to each other, then it is easily seen that the triangle $\Delta ABC$ cannot be obtuse, and must instead be an acute or a right triangle. Proposition 2 ensures that any acute or right triangle is possible here. Another immediate consequence of Proposition 2 is as follows. 

\begin{cor}

A tetrahedron with three prescribed edge lines at one of its vertices can have an opposite face that is congruent to any given acute or right triangle.

\end{cor}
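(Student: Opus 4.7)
The plan is to deduce Corollary 6 almost immediately from Proposition 2, with only a rescaling step added to upgrade similarity to congruence. First, I would take the given vertex of the tetrahedron to be the origin of $\mathbb{R}^3$, and the three prescribed edge lines at this vertex to be $\ell_1$, $\ell_2$, $\ell_3$. The pairwise angles between these lines then determine numbers $\alpha$, $\beta$, $\gamma$ as in Section 2. Given an arbitrary acute or right triangle $T$, let $\angle A$, $\angle B$, $\angle C$ denote its interior angles; by hypothesis each is at most $\pi/2$, and of course $\angle A + \angle B + \angle C = \pi$. Thus the data $(\angle A, \angle B, \angle C, \alpha, \beta, \gamma)$ satisfy all the hypotheses of Proposition 2.

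Applying Proposition 2, I would obtain points $A \in \ell_1$, $B \in \ell_2$, $C \in \ell_3$ such that the interior angles of $\triangle ABC$ at $A$, $B$, $C$ are $\angle A$, $\angle B$, $\angle C$ respectively. Hence $\triangle ABC$ is similar to $T$ with some positive ratio $r = |BC|/a$, where $a$ is the side of $T$ opposite its angle $\angle A$. The key observation is that each $\ell_j$ passes through the origin, so replacing $A$, $B$, $C$ by $A/r$, $B/r$, $C/r$ leaves these points on $\ell_1$, $\ell_2$, $\ell_3$ respectively; the homothety centered at the origin with factor $1/r$ carries $\triangle ABC$ onto a triangle with the same angles but with every side scaled by $1/r$, making it congruent to $T$.

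Finally I would note that the origin together with the rescaled points $A/r$, $B/r$, $C/r$ are the four vertices of a (non-degenerate) tetrahedron: non-degeneracy follows from the fact that $\ell_1$, $\ell_2$, $\ell_3$ are distinct lines through the origin whose pairwise angles obey the triangle-type constraints on $\alpha$, $\beta$, $\gamma$, so the three new vertices are affinely independent from the origin. The face opposite the origin is then the triangle $\triangle(A/r)(B/r)(C/r)$, which is congruent to $T$, establishing the corollary. There is essentially no obstacle here beyond recognizing the scaling freedom, which is exactly the gap between "similar" (what Proposition 2 produces) and "congruent" (what the corollary demands); the fact that the edge lines pass through the tetrahedron's chosen vertex makes this rescaling trivially compatible with the incidence constraints.
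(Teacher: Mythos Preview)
Your proposal is correct and follows exactly the route the paper intends: the paper simply states that the corollary is an immediate consequence of Proposition 2, and your argument is the natural unpacking of that claim. Your explicit rescaling step (using the homothety centered at the origin to pass from similarity to congruence) is a detail the paper leaves implicit, but it is the right observation to make the deduction complete.
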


Extensive experiments using Mathematica and C++ seem to indicate how to extend Proposition 2 to the obtuse triangle case, where one of the numbers $\angle A$, $\angle B$, $\angle C$ exceeds $\pi/2$. Also, a practical variation of Question 2 is obtained by replacing the three lines, $\ell_1$, $\ell_2$ and $\ell_3$, with three rays from the origin. This revised question is closely coupled with the ``Three-Point Perspective (Pose) Problem" discussed in \cite{HLON} and \cite{RW}. Not surprisingly, in this variation of Question 2, the acute triangle case and the obtuse triangle case again yield quite different results. Thorough testing has resulted in several proven claims and a number of conjectures. The ``discriminant of Grunert's system" and the ``basic toroids" considered in \cite{RW} play crucial roles in the analysis.

Finally, Question 1 and Proposition 1 can be recast in terms or the real projective plane $\mathbb{RP}^2$. This is commonly modeled by taking its ``points" to actually be pairs of antipodal points on the real unit sphere. Another useful model identifies the points of the projective plane with the lines through the origin in $\mathbb{R}^3$. The correspondence between these two models is evident since each line through the origin intersects the sphere in a pair of antipodal points. 

The ``elliptic plane" is a name often given to the manifold that results from equipping $\mathbb{RP}^2$ with the metric inherited from the unit sphere. (The name comes from ``elliptic geometry," which confusingly has little to do with ellipses.) We will regard that $\mathbb{RP}^2$ is equipped with this metric. Using the reasoning here, the following claim can be quickly seen to be a consequence of Proposition 1. \\

\begin{cor}
Let $P_1$, $P_2$ and $P_3$ be three collinear points on the elliptic plane such that $\dist(P_2, P_3) \le \pi/2$, $\dist(P_3, P_1) \le \pi/2$, $\dist(P_1, P_2) \le \pi/2$, and $\dist(P_2, P_3) + \dist(P_3, P_1) + \dist(P_1, P_2) = \pi$. Also, let $\alpha$, $\beta$ and $\gamma$ be any strictly positive numbers with $\alpha + \beta + \gamma < 2 \pi$, $\alpha < \beta + \gamma$, $\beta < \gamma + \alpha$ and $\gamma < \alpha + \beta$. Then, there exist points $Q_1$, $Q_2$ and $Q_3$ with $P_1, Q_2, Q_3$ collinear, $P_2, Q_3, Q_1$, collinear, $P_3, Q_1, Q_2$ collinear, $\dist(Q_2, Q_3) = \min\{\alpha,\pi-\alpha\}$, $\dist(Q_3, Q_1) = \min\{\beta, \pi-\beta\}$, and $\dist(Q_1, Q_2) = \min\{\gamma, \pi-\gamma\}$.  \\
\end{cor}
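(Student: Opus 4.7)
The plan is to reduce Corollary 7 to Proposition 1 by reading the spherical configuration of Section 1 in the elliptic plane and then transporting it to the given data by an isometry. Set $\angle A := \dist(P_2,P_3)$, $\angle B := \dist(P_3,P_1)$, $\angle C := \dist(P_1,P_2)$. The hypotheses on $P_1, P_2, P_3$ give $\angle A + \angle B + \angle C = \pi$ with each summand at most $\pi/2$, while the hypotheses on $\alpha, \beta, \gamma$ are exactly those required in Section 1. Proposition 1 therefore furnishes, on the unit sphere, three great circles $\mathscr{C}_1, \mathscr{C}_2, \mathscr{C}_3$ bounding a spherical triangle with side lengths $\alpha, \beta, \gamma$, together with a fourth great circle $\mathscr{C}$ meeting them in antipodal pairs $\{p_j, p'_j\}$ occurring around $\mathscr{C}$ in the cyclic order $p_1, p_2, p_3, p'_1, p'_2, p'_3$ with consecutive arcs of lengths $\angle C, \angle A, \angle B$.

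Second, descend the configuration to the elliptic plane. The four great circles become lines $L_1, L_2, L_3, L$; each antipodal pair $\{p_j, p'_j\}$ collapses to a single point $\widetilde P_j \in L_j \cap L$; and the antipodal pair at $\mathscr{C}_i \cap \mathscr{C}_k$ (with $\{i,j,k\} = \{1,2,3\}$) collapses to a point $\widetilde Q_j$. The labeling of Section 1 then forces $L_j$ to be the unique elliptic line through $\widetilde Q_i$ and $\widetilde Q_k$, and the pairwise distances to be $\dist(\widetilde Q_2,\widetilde Q_3) = \min\{\alpha,\pi-\alpha\}$, $\dist(\widetilde Q_3,\widetilde Q_1) = \min\{\beta,\pi-\beta\}$, $\dist(\widetilde Q_1,\widetilde Q_2) = \min\{\gamma,\pi-\gamma\}$, with $\widetilde P_j \in L_j$ for each $j$. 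Because $\angle A, \angle B, \angle C$ are each at most $\pi/2$, the consecutive arcs along $L$ among $\widetilde P_1, \widetilde P_2, \widetilde P_3$ have lengths exactly $\angle C, \angle A, \angle B$, matching the arc partition of the given line by $P_1, P_2, P_3$.

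Third, the isometry group of the elliptic plane, inherited from $O(3)$, acts transitively on ordered collinear triples that partition their line into arcs of three prescribed lengths, once orientation reversal is permitted. Hence there is an isometry $\Phi$ sending $L$ to the given line and $\widetilde P_j$ to $P_j$ for $j=1,2,3$; if the two cyclic orderings disagree, I compose with a reflection pulled back from $S^2$. Setting $Q_j := \Phi(\widetilde Q_j)$ then transports all collinearity and distance relations verbatim, yielding the points required by Corollary 7.

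The step that I expect to demand the most care is the bookkeeping: matching the Section 1 convention, in which $\mathscr{C}_j$ is labeled by its own side length among $\alpha, \beta, \gamma$, with the Corollary 7 convention, in which the vertex $Q_j$ is opposite the edge carrying the $j$th side length; and verifying that an orientation-reversing elliptic isometry is truly available to realize either cyclic orientation of $P_1, P_2, P_3$ when needed. Both issues are routine once written out carefully, but together they constitute the only real content beyond a direct appeal to Proposition 1.
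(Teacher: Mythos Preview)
Your proposal is correct and follows exactly the route the paper indicates: the paper does not give a detailed proof but simply remarks that, via the antipodal-point model of $\mathbb{RP}^2$, the corollary ``can be quickly seen to be a consequence of Proposition 1,'' and your write-up supplies precisely those details---descending the Section~1 configuration to the elliptic plane and then transporting it by an isometry to the given collinear triple. The bookkeeping you flag (matching the labeling conventions and allowing an orientation-reversing isometry) is indeed the only thing to check, and you have handled it correctly.
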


\noindent This basically means that whenever three collinear points $P_1$, $P_2$ and $P_3$ in $\mathbb{RP}^2$ are not ``bunched too close together," it is always possible to construct a triangle in $\mathbb{RP}^2$ (the projection of a spherical triangle onto $\mathbb{RP}^2$), of any possible shape, such that each of its sidelines is incident, in any prescribed order, with one of the three collinear points. \\


\end{document}